\newtheorem{theorem}{Theorem}[section]
\newtheorem{lemma}[theorem]{Lemma}
\theoremstyle{definition}
\newtheorem{example}[theorem]{Example}
\theoremstyle{remark}
\newtheorem{remark}[theorem]{Remark}
\numberwithin{equation}{section}
\begin{document}

\title{Chebyshev polynomials and higher order Lucas Lehmer algorithm  }
\author{Kok Seng Chua}

\email{chuakkss52@outlook.com}

\
\subjclass[2000]{Primary : 11A51, Secondary : 11A15}



\keywords{Lucas-Lehmer, Chebyshev, Primality}

\begin{abstract}
We extend the necessity part of Lucas Lehmer iteration for testing Mersenne prime to all base and uniformly for both generalized Mersenne and Wagstaff numbers(the later correspond to negative base). The role of the quadratic iteration
$x \rightarrow x^2-2$ is extended by Chebyshev polynomial $T_n(x)$ with an implied iteration algorithm because of  the compositional identity $T_n(T_m(x))=T_{nm}(x)$. This results from a Chebyshev polynomial primality test based essentially on the Lucas pair $(\omega_a,\overline{\omega}_a)$, $\omega_a=a+\sqrt{a^2-1}$, where $a \neq 0 \pm 1$.  It gives a uniform way to detect primality of all integers of the form $\Phi_{p}(q,r):=\frac{q^p-r^p}{q-r}$ for $q \neq 0,\pm 1$ and $gcd(q,r)=1$ which implies for example $T_{q^p-r^p}(a)=T_{q-r}(a)$ mod $\Phi_p(q,r)$ for any $a \neq 0, \pm 1$. The Chebyshev test using $T_n(x)$ is a natural extension of the usual Fermat test using $t_n(x):=x^n$ which satisifies the simplest instance of  the composition law $t_n(t_m)=t_{nm}$.

To test primality of $Q$, the method essentially do a Fermat little test in the ring $\mathbb{Z}/(Q \mathbb{Z})[\sqrt{a^2-1}]$ with unit base $\omega_a$. The advantage is that when we change the base unit $\omega_a$, it also changes the ring which gives more possibility. The integers in a quadratic ring has two components and it is more convenient to work with rational integers by taking trace, which is the  Chebyshev polynomial. 

We further observed that there is a natural generalization of  Mersenne prime search  to multi-prime parameters search given by
general homogenized cyclotomic with odd square free index $\Phi_n(r,s)$. This has explicit product form. As an example, the cyclotomic number
$$\Phi_{2021}(4,13)=\frac{(4-13)(4^{2021}-13^{2021})}{(4^{43}-13^{43})(4^{47}-13^{47})},$$
is a $2152$ digit prime.

We also note that Chebyshev polynomials $T_n,U_n$  have  a twisted bosonic version $S_N,V_n$ and  they can all be derived as odd and even part of a bionomial $(1+x)^n$ following a n MO solution on real-rootedness of $s$ Eulerian polynomial.
\end{abstract}

\maketitle
\section{Main results and proof} The Chebyshev polynomial of the first kind, can be  explicitly defined \cite{WC}  for $|x| \ge 1$, by
$$T_n(x)=\frac{(x+\sqrt{x^2-1})^n+(x-\sqrt{x^2-1})^n}{2}=\frac{\omega_x^n+\overline{\omega}_x^n}{2},$$
where $\omega_x:=x+\sqrt{x^2-1}$, has a natural extension to negative value of $n$, with $T_{-n}(x)=T_n(x)$ since  $\omega \overline{\omega}=1$. In fact we can extend $T_n(x)=\cosh(n \log(\omega_x))$ to all real or even complex value of $n$ and also $x$ (with the recursion
$T_{n+1}(x)=2xT_n(x)-T_{n-1}(x)$ still holds). This also says that for integral $a \neq 0, \pm 1$, $T_n(a)$ is  (half) the trace of the $n$th power of the unit $\omega_a$ in $\mathbb{Q}(\sqrt{a^2-1})$, and indeed for all such $a$, $(\omega_a,\overline{\omega}_a)$ is a Lucas pair in the sense of \cite{BHV}. This seems to suggest that apart from its role in numerical analysis, the Chebyshev polynomial $T_n(x)$ may also have some interesting arithmetical properties.

If $n$ is a positive integer, we have clearly $\omega_a^n=P_n(a)+Q_n(a)\sqrt{a^2-1}$ for some $P_n(x), Q_n(x) \in \mathbb{Z}[x]$, but they are just Chebyshev polynomials
$$\omega_a^n:=(a+\sqrt{a^2-1})^n=T_n(a)+U_{n-1}(a)\sqrt{a^2-1}$$ which follows from
$\omega_a^{n+1}=\omega_a^n\omega_a$ and induction, and this is another way to think of  $T_n(x),U_n(x)$ which also derives the  recursion which one can never remember. It also suggest a twisted version of Chebyshev if we replace $\omega_a$ with the negative unit $a+\sqrt{a^2+1}$. Since $\omega_a$ is a unit, so is $\omega_a^n$ and hence we have the Pell's equation $T_n(a)^2-U_{n-1}(a)^2(a^2-1)=1$. It follows that we have (also deducible from the elementary identity $x=T_1(x)=\cosh (\log (\omega_x))$),
$$\omega_a^n=T_n(a)+\sqrt{T_n(a)^2-1}=\omega_{T_n(a)},$$
and this implies the compositional multiplicativity $T_m(T_n(a))=T_{mn}(a)$.

This seems to be a remarkable property which is special only to Chebyshev and not the other orthogonal polynomials. It implies that one can compute $T_{q^n}(a)$ efficiently as $T_q^n(a)$, where the power on the right means compositional iteration of the smaller polynomial $T_q(a)$.
In particular if we let $s_n=2T_{2^n}(2)$, then using $T_2(x)=2x-1$, we get
\begin{equation} s_0=4,\;\;\;s_{n+1}=s_n^2-2,
\end{equation}
which is exactly Lucas-Lehmer iteration for testing Mersenne prime. So we have for odd prime $p$, $M_p=2^p-1$ is prime if and only if $M_p$ divides $2T_{2^{p-2}}(2)$ ($= s_{p-2}$).

It seems that there is no reason for the base $2$ to be special for Chebyshev and it is natural to wonder if there is a natural test for primality of $q^p-1$ or rather $\Phi_p(q):=(q^p-1)/(q-1)$ (dividing out the obvious factor) for $q>2$ using perhaps $T_{q^{n-2}}(q)$ mod $\Phi_p(q)$, using the implied fast computation.

A little experimentation testing initial primes $p$ upto $300$, led us to  visually discover that there is always a dip (similar to the dip in light intensity in a well known method for detecting exo-planets) in the number of digits of the residues $T_{q^n}(a)$ mod $\Phi_p(q)$ for $a \neq 0, \pm 1$, exactly for primes $p$ where $\Phi_p(q)$ is prime, but there appears to be two distinct residues. Checking some examples show a dependence on the quadratic character
$\left( \frac{a^2-1}{\Phi_p(q)} \right)$ (which always equals $-1$ in the Lucas-Lehmer case where $q=a=2$ since $3$ is a qudratic non residue mod any Mersenne prime). This then allows us to guess the correct statement (1.2) numerically in the theorem below. An unexpected addition, which we discovered accidentally is that the result still holds when $q \neq 0, \pm 1$ is negative, in which case it corresponds to primality
of generalized Wagstaff primes $\Phi_p(-q)=(q^p+1)/(q+1)$ with the same algorithm as long as $T_{-n}(x)$ is implemented as $T_n(x)$.

It is now easy to deduce the proof of $(1.2)$ from the standard proof of the necessity part of Lucas - Lehmer replacing $\omega_2=2+\sqrt{3}$ with $\omega_a=a+\sqrt{a^2-1}$ once we know we need to keep track of the quadratic character (Lemma 1.6).
In this instance, knowing the exact statement of the result (which we found numerically) lead us to the generalized proof. Theorem (1.2) is however weaker than the necessity part of Lucas -Lehmer in the case $q=a=2$, which however can be deduced from Lemma1.6. We keep the statement of (1.2) for its simplicity.

\begin{theorem} Let $q,a$ be integers both not $0,\pm 1$ and $p$ be an odd prime and let $\epsilon=\left( \frac{a^2-1}{\Phi_p(q)} \right)$, if $\Phi_p(q)$ is a prime not dividing $a^2-1$, then
  \begin{equation}
  T_{q^p}(a)=T_{q+\epsilon-1}(a),\;\; U_{q^p-\epsilon (q-1)-2}(a)=0 \;\;\; mod \;\; \Phi_p(q).
  \end{equation}
  More generally, if $\Phi_p(q,r):=\frac{q^p-r^p}{q-r}$ is the homogenized form with $gcd(q,r)=1$, is prime and $\epsilon=\left( \frac{a^2-1}{\Phi_p(q,r)} \right)$, then
  \begin{equation}
  T_{q^p-r^p}(a)=T_{q-r}(a),\;\; U_{q^p-r^p-\epsilon (q-r)-1}(a)=0 \;\;\; mod \;\; \Phi_p(q,r).
  \end{equation}

\end{theorem}
\begin{remark}
This works also for $-q$, for example for $q=-2,a=2,\epsilon=\left( \frac{p}{3} \right)$, it says $N_p=(2^p+1)/3$ prime implies $T_{2^p}(2)=T_{3-\left( \frac{p}{3} \right)}(2)$ mod $N_p$, which in term of (1.1) is equivalent to $N_p$ divides
 $s_p-104-90\left( \frac{p}{3} \right)$. This is a weakened form and  we can derive a stronger version later, namely $N_p$ divides  $s_{p-1}-5-9\left( \frac{p}{3} \right)$. Note also we only need to code one program which will work for both  $\pm q$ provided $T_{-n}(x)$ and $U_{-n}(x)$ are implemented as $T_n(x),-U_{n-2}(x)$ as was the case with PARI-GP which we used.
\end{remark}
\begin{remark} One can compute $T_{q^p}(a)$ efficiently as $T_q^p(a)$ but $U_n(x)$ does not satisfies the compositional identity and in general they don't commute, $U_n(U_m(x)) \neq U_m(U_n(x))$. For large $n$ of no special form, $U_n$ and also $T_n$ can be computed in $O(\log n)$ steps by the usual method of writing a linear recurrence as a matrix power and applied the binary exponentiation as was observed in \cite{Lu}.  We give the formula to compute $T_{n+1}(a), U_n(a)$ mod $Q$  together via a coupled recurrence, which follows from $\omega^{n+1}=\omega^n \omega$ and (2.2) below,
\begin{equation}
\begin{pmatrix} T_{n+1}(a) \cr U_{n}(a) \end{pmatrix}=
     \begin{pmatrix} a & a^2-1 \cr 1 & a \end{pmatrix}^{n} \begin{pmatrix} a \cr 1 \end{pmatrix}\;\; mod \;\;\; Q.
\end{equation}
If $n=q^p \pm \delta$ for small $\delta$, one should use $q$-nary expansion of $n$.
\end{remark}

\begin{remark}
One can express (1.2)  in a simple Lucas-Lehmer form similar to (1.1), let
$$s_0=a,\;\; s_{n+1}=T_q(s_n),$$ if $\Phi_p(q)$ is prime, it divides $s_p-2T_{q+\left( \frac{a^2-1}{\Phi_p(q)} \right)-1}(a)$.
This is weaker than what is provable but have a simple uniform form. Sufficiency actually failed in this weak form for some small $p$ for some choices of $a$. eg.
$$q=11,p=3,a=2,M_{11}=(11^3-1)/10=133=7.19$$
$$q=-5,p=3,a=3,N=(5^3+1)/6=21=3.7$$
but this can be ruled out if we choose other starting point eg. use $a=q$. They also failed the stronger Chebyshev test.
\end{remark}
\begin{remark} Theorem (1.1) can be "seen" visually if we compute a list of values of $T_{q^p}(a)$ mod $\Phi_p(q)$ for primes $p$ up to say $200$. There is clearly a dip in the number of digits of the residues when $\Phi_p(q)$ is prime, and this is how we first saw them.
\end{remark}

Theorem 1 follows immediately from the following lemma.

 \begin{lemma} Let $Q$ be an odd prime and $a \neq 0, \pm 1$, $\omega=a+\sqrt{a^2-1},$ as before, and let $\epsilon=\left( \frac{a^2-1}{Q} \right),\delta=\left( \frac{2(a+1)}{Q} \right)$, then
    \begin{equation}
    \omega^\frac{Q-\epsilon}{2}=\delta\;\; mod \;\;Q,\;\;\;\;
    \end{equation}
    or equivalently,
    \begin{equation}
    T_{\frac{Q-\epsilon}{2}}(a)=\delta,\;\;\;U_{\frac{Q-\epsilon}{2}-1}(a)=0 \;\; mod \;\; Q,\;\;\;\;
    \end{equation}
    and this implies
   $$  T_{\frac{Q-\epsilon}{2}}(a)=\delta \;\;mod \;\; Q^2.$$
    \end{lemma}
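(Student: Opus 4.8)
The plan is to deduce the $\bmod\ Q^2$ congruence from the two $\bmod\ Q$ congruences in (1.5) by exploiting an exact integer identity satisfied by the Chebyshev polynomials, so that the lift requires no new arithmetic input. Write $m=(Q-\epsilon)/2$. First I would record the Pell-type relation that comes directly from the norm condition $\omega\overline{\omega}=1$: since $\omega^m=T_m(a)+U_{m-1}(a)\sqrt{a^2-1}$ and $\overline{\omega}^m=T_m(a)-U_{m-1}(a)\sqrt{a^2-1}$, multiplying the two conjugates gives the identity $T_m(a)^2-(a^2-1)\,U_{m-1}(a)^2=(\omega\overline{\omega})^m=1$, which is a genuine polynomial identity in $a$ with integer coefficients, not merely a congruence.

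The key step is then a squaring trick. By the second congruence in (1.5) we already know $U_{m-1}(a)\equiv 0 \pmod{Q}$; squaring this congruence upgrades it automatically to $U_{m-1}(a)^2\equiv 0 \pmod{Q^2}$. Substituting into the Pell identity of the previous paragraph immediately yields $T_m(a)^2\equiv 1 \pmod{Q^2}$, so $T_m(a)$ is a square root of unity modulo $Q^2$.

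To finish I would use that $Q$ is an odd prime: the unit group $(\mathbb{Z}/Q^2\mathbb{Z})^\times$ is cyclic of even order $Q(Q-1)$, so the only solutions of $x^2\equiv 1$ are $x\equiv\pm1 \pmod{Q^2}$. Hence $T_m(a)\equiv\pm1\pmod{Q^2}$, and it only remains to pin down the sign. Here the first congruence in (1.5), namely $T_m(a)\equiv\delta\pmod{Q}$ with $\delta=\pm1$, does the job: reducing $T_m(a)\equiv\pm1\pmod{Q^2}$ modulo $Q$ and comparing with $T_m(a)\equiv\delta\pmod Q$ forces the selected root to be exactly $\delta$, giving $T_m(a)\equiv\delta\pmod{Q^2}$.

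The argument is short, and I expect the only genuine subtlety to be the sign-matching at the end. One must check that $\delta\in\{\pm1\}$ rather than $0$, which holds because $\delta$ is a Legendre symbol and the implicit hypothesis $Q\nmid a^2-1$ (inherited from the theorem via $\Phi_p(q)\nmid a^2-1$) gives $Q\nmid 2(a+1)$ since $Q$ is odd; one must also use that $+1$ and $-1$ are distinct modulo $Q^2$, so that the residue mod $Q$ unambiguously selects one of the two square roots of unity mod $Q^2$. The exact, rather than merely mod $Q$, nature of the Pell identity is precisely what makes the passage from $Q$ to $Q^2$ automatic.
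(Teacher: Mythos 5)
Your argument for the final clause of the lemma --- the lift from $Q$ to $Q^2$ --- is correct and is essentially the paper's own argument (given in the remark immediately following the lemma): both rest on the exact Pell identity $T_m(a)^2-(a^2-1)U_{m-1}(a)^2=(\omega\overline{\omega})^m=1$, from which $U_{m-1}(a)\equiv 0 \pmod Q$ gives $T_m(a)^2\equiv 1\pmod{Q^2}$. The paper pins down the sign by observing that $Q^2$ divides $(T_m(a)-\delta)(T_m(a)+\delta)$ while $Q$ cannot divide both factors (their difference is $2\delta$, a unit mod $Q$); your route through the cyclicity of $(\mathbb{Z}/Q^2\mathbb{Z})^\times$ reaches the same conclusion and is equally valid. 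Your remark that $\delta\neq 0$ requires $Q\nmid a^2-1$ is a fair point; this hypothesis is implicit in the lemma (as in Theorem 1.1) since $\epsilon$ and $\delta$ are meant to be nonzero Legendre symbols.

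The genuine gap is that you have proved only the last of the lemma's assertions. The congruences (1.4) and (1.5) are \emph{conclusions} of the lemma, not hypotheses, yet your proposal explicitly takes ``the two mod $Q$ congruences in (1.5)'' as given and never establishes them. The substantive arithmetic content of the lemma is precisely $\omega^{(Q-\epsilon)/2}\equiv\delta\pmod Q$. The paper proves this by working in $\mathbb{F}_Q[\sqrt{a^2-1}]$: the Frobenius (freshman's dream) computation $(a-1+\sqrt{a^2-1})^Q\equiv a-1+\epsilon\sqrt{a^2-1}$, using Euler's criterion $(a^2-1)^{(Q-1)/2}\equiv\epsilon$; then multiplication by $a-1-\epsilon\sqrt{a^2-1}$ and the exact identity $(a-1+\sqrt{a^2-1})^2=2(a-1)\omega$ to isolate $\omega^{(Q-\epsilon)/2}$ and identify the resulting constant as $\left(\tfrac{2(a+1)}{Q}\right)=\delta$. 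None of this appears in your write-up, and without it the lemma is not proved; the equivalence of (1.4) with (1.5) via $\omega^n=T_n(a)+U_{n-1}(a)\sqrt{a^2-1}$ (which you do state) is the easy part. As it stands, your proposal is a correct proof of the implication $(1.5)\Rightarrow T_{(Q-\epsilon)/2}(a)\equiv\delta\pmod{Q^2}$ only.
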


 \begin{proof}
  We have computing mod $Q$, $$(a-1+\sqrt{a^2-1})^Q=a-1+\epsilon \sqrt{a^2-1}.$$ Multiplying by $(a-1-\epsilon \sqrt{a^2-1})$ gives
$$(2-2a)^{(1-\epsilon)/2}=(a-1+\sqrt{a^2-1})^{(Q-\epsilon)},$$
and using $(a-1+\sqrt{a^2-1})^2=2(a-1)\omega$ gives us, (note $(-1)^{\frac{1-\epsilon}{2}}=\epsilon$)
$$\omega^{(\frac{Q-\epsilon}{2})}= \left( \frac{2(a+1)}{Q} \right)\;\;mod\;\; Q.$$
But we have (without mod $Q$ )
$$\omega^{\frac{Q-\epsilon}{2}}=T_{\frac{Q-\epsilon}{2}}(a) +U_{\frac{Q-\epsilon}{2}-1}(a) \sqrt{a^2-1},$$
by  (2.2) below,
which give the equivalent (1.5).
 \end{proof}
 \begin{remark}
 Writing $n=\frac{Q-\epsilon}{2}$ , since $\omega$ is a unit , so is $\omega^n$, we must have the Pell's equation $\omega^n \overline{\omega}^n=T_n(a)^2-(a^2-1)U_{n-1}(a)^2=1$. So we have $Q^2$ divides
    $T_n(a)^2-1=(T_n(a)-\delta)(T_n(a)+\delta)$. Since $Q$ divides $T_n(a)-\delta$, its prime divisor  cannot divide $T_n(a)+\delta$, so we always have
    $T_n(a)=\delta$ mod $Q^2$.
 \end{remark}

\begin{proof}(Proof of theorem) Specialize to $Q=\Phi_p(q)=\frac{q^p-1}{q-1}$ ($q$ may be negative) in (1.4) gives
$$ \omega^\frac{q^p-1-\epsilon(q-1)}{2(q-1)}=\delta,$$
and raising to the $2(q-1)$ power (this lose information) gives $\omega^{q^p-1-\epsilon(q-1)}=1$, which is the same as
$$T_{q^p}(a)=T_{\epsilon(q-1)+1}(a)=T_{q+\epsilon-1}(a),\;\;\; U_{q^p-\epsilon(q-1)-2}(a)=0\;\; mod \;\; Q.$$
Proof of (1.3) is similar. Note the T-test is independent of $\epsilon$.
\end{proof}

\begin{remark} Using (1.4), one can find similar divisibility criteria  of the same sequence for many class of primes, eg
$q=\pm 2,a=2$  and $s_n$ the usual Lucas-Lehmer sequence (1.1),  we have

$M_p=2^p-1$ prime implies $M_p$ divides $s_{p-2}$.

$N_p=(2^p+1)/3$ prime implies $N_p$ divides $s_{p-1}-5-9 \left( \frac{p}{3} \right)$

$n>2,M_n=3.2^n-1,$   prime implies  $M_n$ divides $(s_{n-1}^3-3s_{n-1}-4)$

$n>2,N_n=3.2^n+1,$  prime implies    $N_n$ divides $(s_{n-1}+1)(s_{n-1}-2)$, etc...

The last two follows from setting $a=2$, $T_{3.2^{n-1}}(2)=T_1(2)$ and $T_{3.2^{n-1}}(2)=1$.
\end{remark}

\begin{remark} For a cubic example, let $q= \pm 3, a=2$, and $s_0=2,s_{n+1}=s_n(4s_n^2-3),$
then $M_p=(3^p-1)/2$ prime implies it divides $s_p-26$ and $N_p=(3^p+1)/4$ prime implies it divides
$s_p-194+(-1)^{(p-1)/2}168$.
\end{remark}

\section{Chebyshev primality test} If $Q$ is a prime and $a$ an integer with $gcd(a^2-1,Q)=1$, by (1.5), we have
\begin{equation}T_{\frac{Q-\epsilon}{2}}(a)=\left( \frac{2(a+1)}{Q} \right),\;\; U_{\frac{Q-\epsilon}{2}-1}(a)=0\;\;\; mod \;\;Q.
\end{equation}

Clearly all odd primes $Q$  pass this test to every base $a$. We shall called an odd non-prime integer $Q$, with $gcd(Q,a^2-1)=1$, which pass this test a Chebyshev pseudoprime to the base $a$. It depends only on $a$ mod $Q$ but there is no subgroup structure. Chebyshev pseudoprimes are always squarefree except for some prime squared.They are rare and seems rarer than Fermat pseudoprimes. There are only seven of them to the base $2$ upto $20000$,
$$23.43,\;\;\;37.73,\;\;\;103^2,\;\;\;61.181,\;\;\;5.7.443,\;\;\;97.193,\;\;\;31.607.$$
 Is there a Chebyshev pseudoprime to every base mod $Q$ ? A Sierpi\'nski number \cite{WS} is a positive odd integer $k$ such that $N_n=k2^n+1$ is composite for every $n \ge 1$. $k_0=78557$ is the smallest known Sierpi\'nski number, because every $N_n=k_02^n+1$ is divisible by one of $\{3, 5, 7, 13, 19, 37, 73\}$. It may be possible that $N_n$ fail a Chebyshev test for every $n$ for some $a$. Since $N_n=1$ mod $8$ for $n \ge 3$. We get $\epsilon=\delta=1$ if we pick $a=3$. So if $s_0=3, s_{k+1}=2s_k^2-1$, and $N_n^2=(k2^n+1)^2$ does not divide $T_k(s_{n-1})-1$ for every $n \ge 3$, then $k$ is Sierpi\'nski. Note $N_{n+1}=2N_n-1$.
It is open if any of the following five numbers $21181, 22699, 24737, 55459, 67607$ is Sierpi\'nski.

A Chebyshev pseudoprime for the base $a$ is  also a weak Chebyshev pseudoprime  as defined in \cite{MTW} ie. $T_Q(a)=a$ mod $Q$ since the condition on $U$ means $\omega^{Q-\epsilon}=1$ or $\omega^Q=\omega^\epsilon$ and taking trace gives $T_Q(a)=T_1(a)=a$ mod $Q$. There are composites which pass the weak test for all base $a$ OEIS A175530, but all of them fail the strong Chebyshev test for all base from $2$ to $10$.

A square-free $Q$ which pass the $T$ test will also pass the $U$ test.[Proof: We have $T_{Q-\epsilon}(a)=1$ so that
$(\omega^{(Q-\epsilon)/2}-\overline{\omega}^{(Q-\epsilon)/2})^2=0$ and squarefreeness of $Q$ implies $U_{(Q-\epsilon)/2-1}(a)=0$.]
There are many non square-free integers which pass the $T$ test but the only non squarefree integer which can pass both tests are square of prime (Proof ?). So  the second part only serve to rule out non squarefree integer and this is relevant since there is no known efficient  algorithm to detect squarefreeness. However we can always rule out perfect square as input

If $(Q-\epsilon)/2=2^tQ_1$ is even , we can look at the profile $[T_{Q_1}(a), T_{2Q_1}(a),...,T_{(Q-\epsilon)/2}(a)]$ as in the strong pseudoprime test. Since $T_2(x)=2x^2-1$,if  there is a $1$ not preceded by $\pm 1$ or a $-1$ not preceded by $0$,
$Q$ cannot be prime. For the seven pseudoprimes above, the profiles are
$$[1],[0,-1],[9083,0,-1,1],[0,-1,1,1,1],[8416,4431,8861,1],$$
$$[14063,17370,18527,387,1],[18791,1301,18720,0,-1,1],$$
so the strong test rule out $5.7.443$ and $97.193$ as primes. For square free $Q$, $-1$ is always preceded by $0$, since $(\omega^m+\overline{\omega}^m)/2=-1$ implies $(\omega^{m/2}+\overline{\omega}^{m/2})^2=0$ mod $Q$.

We note that for $a \neq 0 \pm 1$, $(\omega_a,\overline{\omega}_a)$ forms a Lucas pair in the sense of \cite{BHV}, since
$\frac{\omega_a}{\overline{\omega}_a}$ is not a root of unity. The associated Lucas number $u_n(\omega_a,\overline{\omega}_a)=U_{n-1}(a)$. It seems to follow from \cite{BHV} that for every $n>1$, $U_n(a)$ has a primitive divisor, ie. there is a prime $p$ which divides $U_n(a)$ but not $a(a^2-1)U_0(a)...U_{n-1}(a)$.

\subsection{Multiplicative order  and sufficiency test} Many of the necessity criteria seems to be sufficient in the range we can compute.
It could be that when $Q$ is composite, the residue behave randomly and the chance they give divisibility is $1/Q$ which is very small so we never see them.

Let $\omega=a+\sqrt{a^2-1}$ be the canonical unit. For an integer power $n$, we must have $\omega^n=P(a)+Q(a)\sqrt{a^2-1}$ for some  $P(x),Q(x) \in \mathbb{Z}[x]$ but for $\omega$, they are just Chebyshev polynomials \cite{WC},or just by induction,
\begin{equation}
\omega^n=T_n(a)+U_{n-1}(a)\sqrt{a^2-1},
\end{equation}
 and $n$ may be negative. Writing $\omega^{n+1}=\omega^n\omega$ gives the recurrence formula in (1.3).

For an odd integer $Q$, let $O_Q(\omega)$ be the multiplicative order of $\omega_a$ mod $Q$, ie. the smallest integer $m$ such that $\omega^m=1$ mod $Q$. This is thus the same as the smallest integer $m$ such that $T_m(a)=1$ and $U_{m-1}(a)=0$ mod $Q$. Note that for a prime $Q$ or a Chebyshev pseudoprime $Q$, we have $\omega_a^{Q-\epsilon}=\delta^2=1$ so that $O_Q(\omega_a)$ divides one of $Q \pm 1$, in particular it divides $Q^2-1$ and $O_Q(\omega) \le Q+1$.

There seems to be only one argument to prove primality of $Q$. One shows that $\omega$ has multiplicative order $Q \pm 1$ and hence $Q$ cannot have any non trivial divisor, since it will have the same order in $\mathbb{F}_p[\sqrt{a^2-1}]$ for the smallest prime $p$ dividing $Q$ of size $t^2<Q \pm 1$. We can determine the order if we know the complete factorization of $Q \pm 1$.
\begin{lemma} Let $Q$ be an odd integer and $a \neq 0, \pm 1$, and assume $\left( \frac{2(a+1)}{Q} \right)=-1$.
Let $\epsilon=\left( \frac{a^2-1}{Q} \right)$. If $Q$ is prime, then $T_{(Q-\epsilon)/2}(a)=-1$ and $U_{(Q-\epsilon)/2-1}(a)=0$.
Conversely if we know the complete factorization $Q-\epsilon=\prod_{j=0}^k q_j^{n_j},j=0,..,k$ where $q_0=2$, and we have
$T_{(Q-\epsilon)/2}(a)=-1$ and $U_{(Q-\epsilon)/2-1}(a)=0$, and also $T_{(Q-\epsilon)/q_j}(a)\neq 1$ or $U_{(Q-\epsilon)/2-1}(a) \neq 0,$
for $j=1,...,k$, then $Q$ is prime.
\end{lemma}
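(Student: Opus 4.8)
The forward implication is, in effect, already proved: it is the special case $\delta=-1$ of the earlier Lemma, equations (1.4)--(1.5). Under the standing hypothesis $\left(\frac{2(a+1)}{Q}\right)=-1$, applying (1.5) to the prime $Q$ gives at once $T_{(Q-\epsilon)/2}(a)=\delta=-1$ and $U_{(Q-\epsilon)/2-1}(a)=0 \bmod Q$. So I would dispose of this direction in a single line and devote the work to the converse, which is the actual primality criterion.

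For the converse, the plan is the Lucas/Pocklington-type order argument in the quadratic ring $R_Q=(\ZZ/Q)[\sqrt{a^2-1}]$. First I would translate the hypotheses into multiplicative-order statements for $\omega=a+\sqrt{a^2-1}$ via (2.2): the pair $T_{(Q-\epsilon)/2}(a)=-1$, $U_{(Q-\epsilon)/2-1}(a)=0$ says $\omega^{(Q-\epsilon)/2}=-1\bmod Q$, so $\omega^{Q-\epsilon}=1$ while $\omega^{(Q-\epsilon)/2}=-1\ne 1$; and each hypothesis $T_{(Q-\epsilon)/q_j}(a)\ne 1$ or $U_{(Q-\epsilon)/q_j-1}(a)\ne 0$ says $\omega^{(Q-\epsilon)/q_j}\ne 1\bmod Q$. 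Together these say exactly that the multiplicative order of $\omega$ mod $Q$ equals $Q-\epsilon=\prod_{j=0}^{k} q_j^{n_j}$, the factor $q_0=2$ being forced by $\omega^{(Q-\epsilon)/2}=-1$.

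Next I would fix any prime $p\mid Q$ and reduce to $R_p=\FF_p[\sqrt{a^2-1}]$. Since the Jacobi symbols being $\pm 1$ forces $\gcd(a^2-1,Q)=1$, the ring $R_p$ is either $\FF_{p^2}$ or $\FF_p\times\FF_p$, with no nilpotents, and $\omega$ is a unit with $\omega\overline\omega=1$. The norm-one elements form the kernel of the norm map $R_p^*\to\FF_p^*$, a cyclic group of order $p-\left(\frac{a^2-1}{p}\right)$, so the order $d_p$ of $\omega$ mod $p$ divides $p-\epsilon_p\le p+1$, where $\epsilon_p=\left(\frac{a^2-1}{p}\right)$. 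I would then argue that $(Q-\epsilon)\mid d_p$: reducing $\omega^{Q-\epsilon}=1$ mod $p$ gives $d_p\mid Q-\epsilon$, and for each $q_j$ one must exclude $d_p\mid(Q-\epsilon)/q_j$, i.e.\ show $q_j^{n_j}\mid d_p$. For $q_0=2$ this is automatic, since $\omega^{(Q-\epsilon)/2}=-1$ stays $\ne 1$ modulo the odd prime $p$. Granting the analogous non-vanishing mod $p$ for the odd $q_j$, one gets $(Q-\epsilon)\mid d_p\mid p-\epsilon_p$, hence $Q-\epsilon\le p+1$. For $\epsilon=-1$ this forces $p\ge Q$, and for $\epsilon=+1$ it forces $p\ge Q-2$; in either case the only divisor of $Q$ that large is $Q$ itself, since a positive integer $p-\epsilon_p<Q-\epsilon$ cannot be a multiple of $Q-\epsilon$. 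Thus $Q=p$ is prime, and the perfect-power case $Q=p^2$ is excluded the same way, as $(Q-\epsilon)\mid(p-\epsilon_p)$ is then numerically impossible.

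The main obstacle is precisely the descent in the last step. The hypotheses supply non-vanishing only modulo $Q$, namely $\omega^{(Q-\epsilon)/q_j}\ne 1\bmod Q$, whereas the order argument needs it modulo the unknown factor $p$, namely $\omega^{(Q-\epsilon)/q_j}\ne 1\bmod p$; these are not equivalent for composite $Q$, since $\omega^{(Q-\epsilon)/q_j}-1$ may be nonzero mod $Q$ yet divisible by one prime factor of $Q$. The clean rigorous remedy, as in the Brillhart--Lehmer--Selfridge form of the Lucas test, is to strengthen the odd-$q_j$ hypotheses to the gcd conditions $\gcd\big(U_{(Q-\epsilon)/q_j-1}(a),Q\big)=1$ (equivalently, $\omega^{(Q-\epsilon)/q_j}-1$ is a unit mod $Q$), which descend to every $p\mid Q$ and yield $q_j^{n_j}\mid d_p$ for all $p$ at once. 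I would either adopt these stronger conditions or, keeping the weaker mod-$Q$ form, try to show by lcm/Jacobi bookkeeping that the order of $\omega$ mod $Q$ being exactly $Q-\epsilon$ already forces primality: the order is the lcm over $p^e\|Q$ of the orders mod $p^e$, each dividing $p^{e-1}(p-\epsilon_p)$, and one must rule out that this lcm can equal $Q-\epsilon$ for composite $Q$. Verifying that divisibility impossibility is the delicate combinatorial point, and is where I expect the real effort to lie.
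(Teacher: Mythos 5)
The paper gives no proof of this lemma at all: it is stated immediately after the heuristic sentence ``one shows that $\omega$ has multiplicative order $Q\pm 1$ \dots since it will have the same order in $\mathbb{F}_p[\sqrt{a^2-1}]$ for the smallest prime $p$ dividing $Q$,'' and that heuristic suffers exactly the defect you put your finger on: the order of $\omega$ modulo a prime factor $p$ of $Q$ only \emph{divides} its order modulo $Q$, so the hypotheses $\omega^{(Q-\epsilon)/q_j}\neq 1 \bmod Q$ do not descend to $\omega^{(Q-\epsilon)/q_j}\neq 1\bmod p$. Your forward direction is correct (it is Lemma 1.6 with $\delta=-1$), your translation of the converse hypotheses into the single statement ``$\omega$ has order exactly $Q-\epsilon$ in $\big((\mathbb{Z}/Q)[\sqrt{a^2-1}]\big)^*$'' is correct (you also silently repair the typo $U_{(Q-\epsilon)/2-1}$ for $U_{(Q-\epsilon)/q_j-1}$ in the statement, which is surely intended), and your diagnosis of the obstacle is accurate. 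But you stop at the decisive point: your first remedy (BLS-style $\gcd$ hypotheses) proves a different lemma, and your second (the ``lcm/Jacobi bookkeeping'') is precisely the content of the converse, which you defer as delicate rather than carry out. As submitted, the converse is not proved.

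The deferred step does go through with the hypotheses as stated, and the bookkeeping is short. Write $Q=\prod_{i=1}^r p_i^{e_i}$ and let $d_i$ be the order of $\omega$ modulo $p_i^{e_i}$, so that $Q-\epsilon=\mathrm{lcm}_i\,d_i$ and $d_i$ divides $p_i^{e_i-1}(p_i-\epsilon_{p_i})$, the order of the (cyclic) group of norm-one units modulo $p_i^{e_i}$. Because $\omega^{(Q-\epsilon)/2}=-1$ modulo every $p_i^{e_i}$, each $d_i$ has $2$-adic valuation exactly $v_2(Q-\epsilon)\ge 1$; hence $\mathrm{lcm}_i\,d_i\le 2^{1-r}\prod_i d_i\le 2^{1-r}\prod_i p_i^{e_i-1}(p_i+1)=2^{1-r}Q\prod_i(1+1/p_i)\le 2Q(2/3)^r$. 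For $r\ge 2$ this gives $Q-1\le\frac{8}{9}Q$, i.e.\ $Q\le 9$, impossible for an odd $Q$ with two distinct prime factors. For $r=1$, $Q=p^e$, the order $Q-\epsilon=p^e-\epsilon_p^{\,e}$ is prime to $p$ and divides $p^{e-1}(p-\epsilon_p)$, hence divides $p-\epsilon_p$, which fails for $e\ge 2$; so $e=1$ and $Q$ is prime. The moral, which you almost reached, is not to descend to a single prime $p$ but to bound the exponent of the norm-one unit group of the whole ring $(\mathbb{Z}/Q)[\sqrt{a^2-1}]$; no strengthening of the hypotheses is needed.
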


For $Q=2^p-1$, and $a=2$, we get $\omega^{2^{p-1}}=-1$ so that $O_Q(\omega_2)=2^p=Q+1$. So $2^p-1$ is prime if and only if $O_2(\omega_2)=Q+1$ if an only if $T_{2^{p-1}}(2)=-1$ and this is equivalent to $T_{2^{p-2}}(2)=0$ mod $Q$.
Instead of starting with $a=2$, we can choose any $a$ of the form $a=1+x^2$ so that $a+1=3y^2$ or $a+1=6y^2$ , we then have $\epsilon=-1=\delta$
we still have $2^p-1$ is prime if and only if $T_{2^{p-2}}(a)=0$ mod $Q$. This condition turns out to be necessary and sufficient and is given in OEIS A18844.

Lemma 2.1 is just the analogue of the usual computational definition of the existence of a primitive root in the case of $\mathbb{Z}/Q^*$ but there is  one basic difference here, since changing base $a$ means changing the group $\mathbb{Z}[\sqrt{a^2-1}]^*$ also.
We can change $a$ until we get the correct order.

\begin{example} Let  $a=2$ and $r=r_1,...r_k$ be an odd square free integer not divisible by $3$  and let $\delta=2-r \in \{0,1\}$ mod $3$, $N=2n+\delta$ and $Q=r2^{N}-1$. Let $s_0=2, s_{n+1}=s_n^2-2$. Then $Q$ is prime implies $Q$ divides
$T_r(s_{N-2}/2)$. Conversely if $Q$ is odd integer of the form $r2^N-1$, and divides $T_r(s_{N-2}/2))$  and in addition,  $T_{(r/r_j)}(s_{N}/2) \neq 1$ mod $Q$, for $j=1,...k$, then $Q$ is prime.
\end{example}
\begin{proof} The value of $\delta$ were chosen such that $Q=1$ mod $3$, so for $a=2$, $\epsilon=-1$ and $\left( \frac{2(a+1)}{Q} \right)=-1$ and we have $\omega^{(Q-\epsilon)/2}=\omega^{r2^{N-1}}=-1$, so that
$T_{r2^{N-2}}(2)=0$. It also implies the order $O_Q(\omega)=t_1...t_k2^N$ where $t_j$ divides $r_j$.
If $T_{(r/r_j)}(s_{N}/2) \neq 1$, $\omega^{(r/r_j)2^N} \neq 1$,
we must have $t_j=r_j$ and $O_M(\omega)=M+1$.
\end{proof}
If we let $r=5$,then for $n$ up to $3000$, there are $29$ primes and $5$ of them at $n=2,18,32,1638,2622$, fail the sufficiency tests.

We also have an order $q$ version

\begin{example} Let $Q=12q^n+1$ be prime where $q$ is an odd prime, then $T_{3q^n}(2)=0$ mod $Q$. Conversely if an odd integer $Q$ is of the form $12q^n+1$  satisfies $T_{3q^n}(2)=0$, and in addition $T_{4q^n}(2), T_{12q^{n-1}}(2)$ are all not $1$ mod $Q$, then $Q$ is prime.
\end{example}
\begin{proof} We have $Q=1$ mod $3$ and $5$ mod $8$. So if $a=2$, $\epsilon=1, \delta=-1$ so that we have $\omega^{6q^n}=-1$.
So $T_{3q^n}(2)=0$ and $O_Q(\omega)=4.3^{t_1}q^{t_2}, t_1 \le 1, t_2 \le n$. It is $12q^n=Q-1$ iff $T_{12q^{n-1}}(2) \neq 1$ and $T_{4q^n}(2) \neq 1$.
\end{proof}
For $q=5$, $Q$ is prime when
$$n=1[1,1],5,7,18,19,23,46,51,55,69[1,*],126[*,1],469,1835[*,1],3079,3249,4599,4789$$ but the primality proof failed
for $1,69,126,1835$, but we get a proof when we change base.
\smallskip

Recall that a Proth's number $N=k2^n+1$, where $k$ is odd and $k<2^n$, is prime if and only if there ia an integer $a$ such that $a^{(N-1)/2}=1$ mod $N$.

We have an  exact analogue
\begin{lemma} Let $N=k2^n+1$ where $k$ is odd and $k<2^n$. Let $a$ be such that $\epsilon=-\delta=1$, then $N$ is prime if and only if
it pass the  Chebyshev test, ie.  $\omega_a^{(N-1)/2}=-1$ or equivalently $T_{k2^{n-1}}(a)=-1, U_{k2^{n-1}-1}(a)=0$ mod $N$.
\end{lemma}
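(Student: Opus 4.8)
The plan is to treat this as the exact Chebyshev analogue of Proth's theorem, with the norm-one unit $\omega_a$ playing the role of a base in $(\mathbb{Z}/N)^*$. The necessity direction will be immediate: if $N$ is prime, then since $\epsilon=\left(\frac{a^2-1}{N}\right)=1$ and $\delta=\left(\frac{2(a+1)}{N}\right)=-1$, equation (1.4) applied with $Q=N$ gives $\omega_a^{(N-1)/2}=\omega_a^{(N-\epsilon)/2}=\delta=-1$ mod $N$, and expanding by (2.2) this is the same as $T_{(N-1)/2}(a)=-1$ and $U_{(N-1)/2-1}(a)=0$ mod $N$. Since $(N-1)/2=k2^{n-1}$, this is exactly the stated test.

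For sufficiency I would argue through the multiplicative order of $\omega_a$. First, $\epsilon\neq 0$ forces $\gcd(a^2-1,N)=1$, so $\omega_a$ is a unit modulo $N$ and modulo every prime $p\mid N$. From $\omega_a^{(N-1)/2}=-1$ we get $\omega_a^{N-1}=1$ but $\omega_a^{(N-1)/2}\neq 1$; hence $O_N(\omega_a)$ divides $N-1=k2^n$ but not $(N-1)/2=k2^{n-1}$, and because $k$ is odd this forces $2^n\mid O_N(\omega_a)$. Reducing the same two congruences modulo any prime $p\mid N$ (legitimate since $-1\neq 1$ for odd $p$) gives likewise $2^n\mid O_p(\omega_a)$.

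The next step bounds $O_p(\omega_a)$ from above. Since $\omega_a\overline{\omega_a}=a^2-(a^2-1)=1$, the element $\omega_a$ is a norm-one unit of $\mathbb{F}_p[\sqrt{a^2-1}]$: if $a^2-1$ is a square mod $p$ then $\omega_a\in\mathbb{F}_p^*$ and $O_p(\omega_a)\mid p-1$, while if it is a non-square then $\mathbb{F}_p[\sqrt{a^2-1}]=\mathbb{F}_{p^2}$, Frobenius sends $\omega_a\mapsto\overline{\omega_a}$, and $\omega_a^{p+1}=\omega_a\overline{\omega_a}=1$, so $O_p(\omega_a)\mid p+1$. In either case $O_p(\omega_a)\le p+1$, which together with $2^n\mid O_p(\omega_a)$ yields $p\ge 2^n-1$ for every prime divisor $p$ of $N$. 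Now suppose $N$ is composite and let $p$ be its least prime factor, so $p\le\sqrt{N}$. Since $k$ is odd and $k<2^n$ we have $N\le(2^n-1)2^n+1<4^n$, hence $p<2^n$, and combined with $p\ge 2^n-1$ this forces $p=2^n-1$. But then $2^n\equiv 1$ mod $p$, so $N=k2^n+1\equiv k+1$ mod $p$, and $p\mid N$ would require $p\mid k+1$; as $k+1$ is even with $2\le k+1\le 2^n=p+1$ while $p$ is odd, no such multiple exists. This contradiction shows $N$ is prime.

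I would flag that last paragraph as the crux. In classical Proth the order always divides $p-1$, so $2^n\mid p-1$ immediately gives $p\ge 2^n+1>\sqrt N$ and a clean contradiction; here the norm-one unit may instead have order dividing $p+1$, and the size bound alone leaves the borderline value $p=2^n-1$ open. Eliminating it is the real work, and the parity argument using $k$ odd (equivalently the residue $N\equiv k+1$ mod $2^n-1$) is what closes the gap. The remaining steps — the expansion (2.2) and the norm-one order bound $O_p(\omega_a)\mid p\pm 1$ — are routine once the hypothesis $\epsilon=-\delta=1$ is in force.
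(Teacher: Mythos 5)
Your proof is correct, and its overall strategy---reduce the condition $\omega_a^{k2^{n-1}}=-1$ modulo a prime divisor $p$ of $N$, deduce $2^n\mid O_p(\omega_a)$, and bound $O_p(\omega_a)$ by $p+1$ via the norm-one/Frobenius argument---is exactly the paper's. The difference is in the last step, and it is a difference that matters: the paper's proof simply asserts that $p+1\ge O_p(\omega)\ge 2^n$ "means every prime divisor of $N$ is greater than $\sqrt{N}$," but $p\ge 2^n-1$ does not by itself imply $p>\sqrt{N}$, since with $k=2^n-1$ one has $N=2^{2n}-2^n+1>(2^n-1)^2$. You correctly flag this as the crux, isolate the borderline case $p=2^n-1$, and eliminate it by the congruence $N\equiv k+1\pmod{2^n-1}$ together with the parity of $k+1$ and the bound $k+1\le 2^n$. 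So your write-up is not merely a rederivation: it supplies a step that the paper's one-line argument needs (at least for $k$ close to $2^n$) and omits. The necessity direction and the translation between $\omega_a^{(N-1)/2}=-1$ and the pair of congruences on $T$ and $U$ via (2.2) match the paper's treatment.
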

\begin{proof} The necessity is just Chebyshev test. Conversely $\omega^{(N-\epsilon)/2}=\omega^{k2^{n-1}}=-1$ mod $N$ implies the same mod any prime $p$ dividing $N$, which implies $p+1 \ge O_p(\omega) \ge 2^n$, which means every prime divisor of $N$ is  greater than $\sqrt{N}$.
\end{proof}
A special case of this is a question in MathOverflow \cite{Te1}, where we  set $a=4$ (see also answer by Ian Algol). The requirement $\epsilon=-\delta=1$ translate to
$\left( \frac{5}{N} \right)=-1$ and $\left( \frac{N}{3} \right)(-1)^{(N-1)/2}=-1$ since $N=1$ mod $8$ for $n>2$, and note that $P_n(x)=2T_n(x/2)$.

For any fixed $k$ and $n$, there is always some choice of $a$ to give a necessary and sufficient condition. What we want is for a fixed $k$ to find an $a$ which works for all $n$ but for  $k=3$, this does not seem to be possible.

\begin{example} In the same way if $n>1$ and $F_n=2^{2^n}+1$ and  set $a=4$, we have $\epsilon=1=-\delta$, so $\omega_4^{2^{2^{n-1}}}=-1$
so that $O_{F_n}(\omega_4)=2^{2^{n}}=F_n-1$ and also $\omega_4^{2^{2^n-2}}+\overline{\omega}_4^{2^{2^n-2}}=0$. So $F_n$ is prime if and only if $T_{2^{2^n-2}}(4)=0$ mod $F_n$. In Lucas-Lehmer term if $s_0=8, s_{n+1}=s_n^2-2$, then $F_n$ is prime if and only if $F_n$ divides $s_{2^n-2}$.
\end{example}

\section{Chebyshev polynomials are just even and odd part of binomials}
\subsection{Twisting Chebyshev polynomials}
The Chebyshev test depends on the unit $a+\sqrt{a^2-1}$. It is natural to wonder if we get something new  using the negative units $a+\sqrt{a^2+1}$ instead.
 If we let $\omega_x=x+\sqrt{x^2+1}$, we will have $\omega_x\bar{\omega}_x=-1$.
If we define the polynomial $\omega_x^n=S_n(x)+V_{n-1}(x)\sqrt{x^2+1}$, we have $S_1(x)=x, V_0(x)=1$ and
$$S_{n+1}(x)=xS_n(x)+(x^2+1)V_{n-1}(x)$$
$$V_n(x)=S_n(x)+xV_{n-1}(x),$$
or
\begin{equation}
\begin{pmatrix} S_{n+1}(a) \cr V_{n}(a) \end{pmatrix}=
     \begin{pmatrix} a & a^2+1 \cr 1 & a \end{pmatrix}^{n} \begin{pmatrix} a \cr 1 \end{pmatrix}.
\end{equation}
Again $S_n(x)=\frac{\omega_x^n+\bar{\omega}_x^n}{2}$. $S_n(x)$ is the same as $T_n(x)$ with all coefficients positive and $V_n$ is $U_n$ with all sign positive. So they are congruent mod $2$. Roots of $S_n,V_n$ are $i$ times those of $T_n,U_n$,
which follows from $S_n(x)=i^nT_n(x/i),  V_n(x)=i^nU_n(x/i)$. We also have the finite golden ratio
$$ x+\cfrac{1}{x+\cfrac{1}{x+...\cfrac{...}{x+\cfrac{1}{x}}}}=\frac{V_n(x/2)}{V_{n-1}(x/2)}=g_{A_n,1}(x),$$
which is the hyperbolic version of
$$ x-\cfrac{1}{x-\cfrac{1}{x-...\cfrac{...}{x-\cfrac{1}{x}}}}=\frac{U_n(x/2)}{U_{n-1}(x/2)}=f_{A_n,1}(x),$$
which converge to $\frac{x+\sqrt{x^2 \mp 4}}{2}$ which are positive branch to the inverse of the simplest quadratic $f(x)=x \mp \frac{1}{x}.$  Here $f_{A_n,1}(x)=\frac{\det(xI-A_n)}{\det(xI-A_{n-1})},g_{A_n,1}(x)=\frac{Per(xI-A_n)}{Per(xI-A_{n-1})}$, are Cauchy interlacing ratio for the path graph on $n$ vertices to that with one end point deleted.

Also we have the Pell's equation
$$S_n^2(x)-V_{n-1}^2(x)(x^2+1)=(-1)^n,$$
so that
$\omega_x^n=S_n(x)+\sqrt{S_n(x)^2-(-1)^n}$ so that $\omega_x^{n}=\omega_{S_{n}(x)}$, for odd $n$,
and $S_m(S_n(x))=S_{mn}(x)$ for $n$ odd. Note $S_3(S_2(x))=32x^6+48x^4+30x^2+7 \not= S_6(x)=32x^6+48x^4+18x^2+1=S_2(S_3(x))$. We still have compositional commutativity $S_n(S_m)=S_m(S_n)$ for $m,n$ both odd. Is there a  primality test based on iterating this ?

 \subsection{Square root of even and odd part of Chebyshev polynomials:  s-Eulerian and Erhart polynomials} The polynomial
 $$P_n(x)=1 + \binom{n}{2} x + \binom{n}{4} x^2 + \binom{n}{6} x^3 + \binom{n}{8} x^4 +\ldots + \binom{n}{2\lfloor\tfrac{n}{2}\rfloor} x^{\lfloor \frac{n}{2}\rfloor},$$
 is an $s$-Eulerian polynomial and is known to be real rooted.
 Since $degree(P_n)=\lfloor n/2 \rfloor$, we expect $P_n$ to interlace $P_{n-2}$ which was supported by computations.
 A problem posted in MO (Luis Ferroni) asked to prove that the polynomial $Q_n(x):=P_n(x)-nx$ which is the Ehrhart $h$*-polynomial of the hypersimplex $\Delta_{2,n}$ is also real rooted.

This was solved explicitly by  Fedor Petrov who observe that
 $$2P_n(-x)=(1+i\sqrt{x})^n+(1-i\sqrt{x})^n.$$
 If we expand $2P_1(x)*2P_{n-1}(x)$, we get the recursion
   $P_n(x)=2P_{n-1}(x)+(x-1)P_{n-2}(x)$ so that we have a continued fraction
 $$\frac{P_n(x)}{P_{n-1}(x)}=2+\frac{x-1}{\frac{P_{n-1}(x)}{P_{n-2}(x)}}=2+\cfrac{x-1}{2+\cfrac{x-1}{...+\cfrac{...}{2+\cfrac{x-1}{x+1}}}}$$
 which gives a fast way to compute $P_n(x)$.

We also have by expanding $2P_2(-x).2P_{n-2}(-x)$, $$ P_n(x)=2(1+x)P_{n-2}(x)-(1-x)^2P_{n-4}(x)$$ which gives

$$\frac{P_n(x)}{P_{n-2}(x)}=2(1+x)-\frac{(1-x)^2}{\frac{P_{n-2}(x)}{P_{n-4}(x)}}=2(1+x)-\cfrac{(1-x)^2}{2(1+x)-\cfrac{(1-x)^2}{...-\cfrac{...}{2(1+x)-\cfrac{(1-x)^2}{x+1}}}}$$
which allow us to prove interlacing inductively by looking at the graphs.

It is obvious from the MO solution that
$P_n$ is essentially a squareroot of Chebyshev. $T_n(x)=x^nP_n((x^2-1)/x^2)$ so we know their roots explicitly from those of $T_n$, and they are given by
$$-\tan((k+1/2)\pi/n)^2,k=0,...\lfloor (n/2 \rfloor-1.$$

\subsubsection{Deriving Chebyshev polynomilas  from the binomial $(1+x)^n$}
We let
$$(1+i\sqrt{x})^n=P_n(-x)+iQ_n(-x)\sqrt{x},$$
so that
$$Q_n(x)=\sum_{j=0}^{\lfloor n/2-1 \rfloor} { n \choose 2j+1}x^j, \; \; \;
P_n(x)=\sum_{j=0}^{\lfloor n/2 \rfloor} { n \choose 2j}x^j,$$
$$2P_n(-x)=(1+i\sqrt{x})^n+(1-i\sqrt{x})^n$$
$$2iQ_n(-x)\sqrt{x}=(1+i\sqrt{x})^n-(1-i\sqrt{x})^n,$$

The tangent substitution $x=(1-y^2)/y^2$ gives
$$T_n(y)=y^nP_n(\frac{y^2-1}{y^2}),U_{n-1}(y)=y^{n-1}Q_n(\frac{y^2-1}{y^2}),$$
since $(x+\sqrt{x^2-1})^n=T_n(x)+U_{n-1}(x)\sqrt{x^2-1}$, and we also have the dual

$(x+\sqrt{x^2+1})^n=S_n(x)+V_{n-1}(x)\sqrt{x^2+1}$.

The hyperbolic tangent substitution $x=(1+y^2)/y^2$,  gives
$$ S_n(y)=y^nP_n(\frac{y^2+1}{y^2}),  V_n(y)=y^{n-1}Q_n(\frac{y^2+1}{y^2}).$$

Multiplying $(1+i\sqrt{x})^n$ with $(1-i\sqrt{x})^n$  gives $(1+x)^n=P_n(-x)^2+xQ_n(-x)^2$.
This does not look obvious and it implies a series of binomial identities.
Obviously we also have
$$(1+x)^n=\sum_{j=0}^{\lfloor(n/2) \rfloor}{n \choose 2j}x^{2j}+x\sum_{j=0}^{\lfloor((n-1)/2) \rfloor}{n \choose 2j+1}x^{2j}=P_n(x^2)+xQ_n(x^2).$$
which give  the functional equation
$$P_n(x^2)+xQ_n(x^2)=P_n(-x)^2+xQ_n(-x)^2 $$
but note $P_n(x^2) \not= P_n(-x)^2$. Are there other such pairs of polynomials ?

So the Chebyshev polynomials $T_n,U_n$ are essentially just the even and odd part of the binomial polynomial $(1+x)^n$.

\subsection{Aside : Extending Mersenne primes search to cyclotomic primes} The Mersenne prime $M_p=2^p-1$ can be expressed as $M_p=\Phi_p(2)$ where $\Phi_m(x)$ denotes the $m$th cyclotomic polynomial. It  is natural and seemingly useful to generalize the Mersenne prime  to more general cyclotomic primes. Let $\Phi_m(x,y)=\Phi_m(x/y)y^{\phi(m)}$ be the homogenized cyclotomic polynomial.
Since $\Phi_m(x)$ are irreducible with fixed divisor $1$, ($\phi_m(1)=1$, except $\phi_p(1)=\phi_{p^2}(1)=p$) , there should be infinitely many primes of the form $\Phi_m(r,s)$ as $r,s$ varies, by Schinzel's conjecture. One expects some form of uniform distribution so that we still get infinitely many primes if we fixed $r,s$ with $|s| < r$ and $gcd(r,s)=1$ and varies $m$. For example, one can widen the difficult question for the infinitute of Mersenne primes to those of the form $\Phi_m(2,1)$ (note not the same as $2^m-1$), and more generally, are there always infinitely many primes of the form $\Phi_m(r,s)$ for foxed $|s|<r,gcd(r,s)=1$?

The Mersenne prime search also generalize naturally to multi prime-tuple search for primes of the form $\Phi_{p_1..p_k}(r,s)$ over distinct odd primes $p_1,...,p_k$. Non-squarefee $m$ with $\Phi_m(r,s)$ prime are rare, and only occurs when $m=p^2$ and $\Phi_{2m}(x)=\Phi_m(-x)$ for $m$ odd.  We have  found a $2152$ digit prime $\Phi_{2021}(4,13)$. Mersenne numbers are interesting partly because they have a simple form $\Phi_p(r,s)=\frac{r^p-s^p}{r-s}$ and this generalizes to  $\Phi_m(r,s)=\prod_{d|m} (r^d-s^d)^{\mu(n/d)}$ which follows from $x^m-1=\prod_{d|m} \Phi_d(x)$ by inclusion/exclusion,
so we can express the above prime, in nicer form
$$\Phi_{2021}(4,13)=\frac{(4-13)(4^{2021}-13^{2021})}{(4^{43}-13^{43})(4^{47}-13^{47})}.$$
We also found a  three-tuple  $5599$ digit example
$$\Phi_{13.17.29}(11,-4)=\frac{(11^{6409}+4^{6409})(11^{13}+4^{13})(11^{17}+4^{17})(11^{29}+4^{29})}{(11+4)(11^{221}+4^{221})(11^{377}+4^{377})(11^{493}+4^{493})}.$$
For $\pi$ day we found 
$$\Phi_{79.89}(3,-14)=\frac{(3+14)(3^{79.89}+14^{79.89})}{(3^{79}+14^{79})(3^{89}+14^{89})}$$
which is a $7868$ digit prime.

This seems to be very useful for outreach purpose to impress the (even educated) public, perhaps more impressive than  Mersenne primes. It is not at all obvious that the RHS is even an integer and it may seem  mysterious that the it will somehow cancel out and left with a single term  which is a prime. However we do not have a simple sufficiency test like Lucas-Lehmer for Mersenne prime.

The product formula for example for $k=2$,
$\Phi_{p.q}(r,s)=\frac{(r^{p.q}-s^{p.q})-(r-s)}{(r^p-s^p)(r^q-s^q)}$ means we are searching   along prime exponents which  does not seem to be governed by the usual conjectures.

It does not seem easy to even prove that there is at least one such prime for fixed $r,s$ with $gcd(r,s)=1$. Maybe the only way is to prove positive density but they seem very sparse.

\section*{Acknowledgements} This works started  when we read some posting on MathOverflow of user Pedja Terzi\'c and realized that the function he defined is essentially Chebyshev polynomial $T_n(x)$ and that the compositional identity $T_n(T_m(x))=T_{nm}(x)$ means there is an implied  $q$-nary Lucas-Lehmer iteration algorithm. Numerical experimentation then lead us to the statement of theorem 1.1. The $q$-nary Lucas Lehmer is essentially known in many posting by Pedja Terzi\'c \cite{Te1, Te} and these can all be derived from our main  Lemma 1.6.


\bibliographystyle{amsplain}

\end{document}